\newtheorem*{rep@theorem}{\rep@title}
\newcommand{\newreptheorem}[2]{%
\newenvironment{rep#1}[1]{%
 \def\rep@title{#2 \ref{##1}}%
 \begin{rep@theorem}}%
 {\end{rep@theorem}}}
\numberwithin{equation}{section}
\theoremstyle{plain}
\newtheorem{theorem}{Theorem}[section]
\newtheorem{proposition}[theorem]{Proposition}
\newtheorem{corollary}[theorem]{Corollary}
\theoremstyle{definition}
\newtheorem{definition}[theorem]{Definition}
\newtheorem{remark}[theorem]{Remark}
\newtheorem{notation}[theorem]{Notation}
  \author[Amit Kumar Singh]{Amit Kumar Singh}
\address{Department of Mathematics, SRM University AP, Amaravati 522 240, Andhra
Pradesh, India}
\email{amitks.math@gmail.com}
\title{On the smoothness of moduli spaces for quiver bundles}
\subjclass[2020]{14D20}
\keywords{Vector bundles, Moduli spaces, quivers}
\begin{document}

 \maketitle
\begin{abstract}
In this article, we study the smoothness of the moduli space of finite quiver vector bundles over the smooth complex projective curves.
\end{abstract}
\section{Introduction}

 One of the most important and well-explored problems in algebraic geometry is to study the geometry of moduli spaces of vector bundles over algebraic curves. The moduli spaces have been a fundamental tool in the classifications of parametrized algebro-geometric objects. Note that it is not possible to parametrize the set of all isomorphism classes of vector bundles over a smooth projective curve with a fixed rank and degree in the sense of a variety or scheme. To tackle this issue, Mumford introduced the notion of (semi)stability of vector bundles over a smooth curve.  
 Further, he constructed the moduli space of stable vector bundles of the fixed rank and degree over a curve by using the geometric invariant theory (GIT) \cite{Mumford-GIT-1965}. This moduli space turns out to be a quasi projective variety. 
Later, Seshadri proved that if the rank and degree are coprime, then it is a smooth projective variety \cite{Seshadri-1982}.

The main focus of this article is to study the certain geometric properties of the moduli spaces of quiver bundles over the smooth complex projective curves. Though a considerable amount of work has been done to understand these moduli spaces (see \cite{B-S-2017, Consul-09, Consul-Prada-Schmitt-05}), but still not much is known. At this point, we must say that the study of the quivers is significant on its own. Because it has a deep connection with many branches of mathematics, specially in representation theory as a description of several interesting categories in terms of representation are given by quivers concretely.

   Let $Q = (V, A)$ be a quiver, and let $C$ be a smooth projective curve over an algebraically closed field of characteristics zero. Let $\alpha = (\alpha_i)_{i \in V} \in \mathbb{R}^{\vert V \vert}$ be a stability parameter. For a fixed tuple $t = (r_i, d_i)_{i \in V}$, the moduli space $\mathcal{M}_\alpha^s(t)$ of $\alpha$-semistable of $Q$-bundles on $C$ of type $t$ has been constructed by A. Schmitt \cite{AS-2003, AS-05}.

    For an arbitrary $n$, \'Alvarez-C\'onsul,  Garc\'ia-Prada and Schmitt have undertaken a systematic study of holomorphic $(n+1)$-chains of vector bundles \cite{Consul-Prada-Schmitt-05} (it is an example of a quiver bundle). In the same paper, they also studied the parameter region of $\alpha$ where the moduli space $\mathcal{M}_\alpha^s(t)$ may not be empty.

In this article, we prove that the moduli space $\mathcal{M}_\alpha^s(t)$ of $\alpha$-semistable of the certain quiver bundles on $C$ is smooth provided $\alpha_{at}- \alpha_{ah} \geqslant 2g -2$ by using the deformation theory and the hypercohomology technique. And we then compute the dimension of $\mathcal{M}_\alpha^s(t)$. More explicitly, at a smooth point $E_\bullet \in \mathcal{M}_\alpha^s(t)$, 
\[
\dim(\mathcal{M}_\alpha^s(t)) = 1 - \chi(E_\bullet, E_\bullet).
\]
At the end, we construct the projective bundle over $\mathcal{M}_\alpha^s (t)$ (see Theorem \ref{theorem construstion of poincare bundle}).

 \section{Preliminaries}\label{sec Preliminaries}

A quiver $Q = (V, A)$ is a directed graph consisting of the set $V$ of vertices and the set $A$ of arrows. Let $h, t:A \to V$ denote the head and tail maps of the quiver $Q$ respectively. We say the given quiver $Q= (V,A)$ is finite if the vertex set $V$ and the arrow set $A$ are finite sets. In this article, we assume all the quivers are finite. A ${Q}$-bundle $(E_{\bullet}, \phi_{\bullet})$ on a projective curve $C$ is a collection $\{E_i\  |\  i\in V\}$ of vector bundles over $C$ together with a family of morphisms  $\{\phi_a : E_{at}\to E_{ah}\ |\ a \in A\}$ of vector bundles.

Let $Q = (V, A)$ be a quiver and let $C$ be a projective curve. A $Q$-bundle $(F_\bullet, \varphi_\bullet)$ on $C$ is said to be  a sub $Q$-bundle of a $Q$-bundle $(E_\bullet, \phi_\bullet)$ on $C$, if for each $i \in V$, $F_i$ is a subbundle of $E_i$, and for each $a \in A$, $\varphi_a = {\phi_{a}}_{\vert_{F_{at}}}$.
 We write $E_{\bullet}$ a $Q$-bundle on $C$ instead of $(E_{\bullet}, \phi_{\bullet})$ unless otherwise needed. We denote the cordiality of the set $V$ by $\vert V \vert$.  

A morphism $f_\bullet$ between $Q$-bundles $(E_\bullet, \phi_\bullet)$ and $(F_\bullet, \psi_\bullet)$ consists of a family $\{f_i : E_i \to F_i\}_{i \in V}$ of morphisms of vector bundles, and for all $a \in A$, we have the following commutative diagrams
\[
\begin{tikzcd}
E_{at} \arrow{r}{\phi_a }\arrow{d}{f_{at} } & E_{ah} \arrow{d}{f_{ah}} \\
F_{at} \arrow{r}{\psi_{a}} & F_{ah}
\end{tikzcd}.
\]

\begin{definition}\label{def: degree and slope of a quiver bundle}

 Let $E_\bullet$ be a $Q$-bundle on a projective curve $C$. Let $\alpha = (\alpha_i)_{i \in V} \in \mathbb{R}^{\vert V \vert}$ be a fixed parameter.

\begin{enumerate}
\item The $\alpha$-degree of $E_\bullet$ is defined by
\begin{eqnarray*}
\deg_\alpha (E_{\bullet}) = \sum\limits_{i \in V} (\alpha_i\; \text{rk}(E_i) + \deg(E_i)).
\end{eqnarray*}
\item The $\alpha$-slope of $E_\bullet$ is defined by
\begin{eqnarray*}
\mu_\alpha (E_{\bullet}) = \frac{\sum\limits_{i \in V} (\alpha_i \; \text{rk}(E_i) + \deg(E_i))}{\sum\limits_{i \in V} \text{rk}(E_i)}.
\end{eqnarray*}  
\item The $Q$-bundle $E_\bullet$ is said to be $\alpha$-(semi)stable, if for every sub $Q$-bundle $F_\bullet$ of $E_\bullet$, 
\[
\mu_\alpha(F_\bullet) (\leqslant) \mu_\alpha(E_\bullet).
\]
\end{enumerate}
\end{definition}

 At this point, we observe that for any fixed parameter $\alpha = (\alpha_i)_{i \in V}$, a constant $c \in \mathbb{R}$ and any $Q$-bundle $E_\bullet$, $\mu_{\alpha + c}(E_\bullet) = \mu_{\alpha}(E_\bullet) + c$, where $\alpha +c = (\alpha_i + c)_{i \in V}$. Therefore, $Q$-bundle $E_\bullet$ is $\alpha$-(semi)stable if and only $Q$-bundle $E_\bullet$ is $(\alpha + c)$-(semi)stable.
 
 \begin{definition}
 Let $Q= (V, A)$ be a quiver. A pair $t = (r, d)$ is said to be a type $t$ of $Q$ if 
 $
 r = (r_i)_{i \in V} \in \mathbb{Z}_{\ge 0}^{\vert V \vert}$ \text{and} $d= (d_i)_{i \in V} \in \mathbb{Z}^{\vert V \vert}$.
\end{definition}
 For a given a $Q$-bundle $E_\bullet =  \{ E_i \mid i  \in V \}$, its type  $t$ of the quiver $Q$ is defined as
 $t = (\text{rk}(E_\bullet), \deg(E_\bullet))$,
 where $\text{rk}(E_\bullet) = (\text{rk}(E_i))_{i \in V}$, $\deg(E_\bullet) = (\deg(E_i))_{i \in V}$.
 \begin{remark}
 We fix the type $t = (r, d)$ of a quiver $Q=(V,\; A)$, where $ r = (r_i)_{i \in V}$ and $d = (d_i)_{i \in V}$.  Note that the definition of $\alpha$-degree and $\alpha$-slope of a quiver bundle (Definition \ref{def: degree and slope of a quiver bundle}) only depend on its type. Therefore, the $\alpha$-degree and $\alpha$-slope of a type $t = (r, d)$ defined as 
 \begin{eqnarray*}
 \deg_\alpha(t) = \sum\limits_{i \in V}(\alpha_i \; r_i + d_i)
\end{eqnarray*} 
and
 \begin{eqnarray*}
 \mu_{\alpha}(t) = \frac{\deg_\alpha (t)}{\sum\limits_{i \in V} r_i}.
 \end{eqnarray*}
 \end{remark}
 
 For fixed type $t = (r, d)$ of a quiver $Q$, let $\mathcal{M}_{\alpha}^s (t)$ be the set of S-equivalence classes of $\alpha$-semistable quiver bundles of type $t$. The GIT construction of this space is done in \cite{Consul-09}.

 A holomorphic $(n+1)$-chain of vector bundles over a curve $C$ is a collection $\{ E_i \; \vert \; i = 1, 2, \dots , n +1 \}$ of vector bundles over $C$ with the morphisms $\{ \varphi_i : E_i \to E_{i+1} \; \vert \; i = 1, 2, \dots, n \}$ (it is an example of a quiver bundle). Moreover, for holomorphic $(n+1)$-chain of vector bundles, the moduli space $\mathcal{M}_\alpha^s(t)$ is smooth, if $\alpha_i - \alpha_{i-1} \geqslant 2g -2$, for all $1 \leqslant i \leqslant n+1$ (See \cite[Theorem 3.8(vi)]{Consul-Prada-Schmitt-05}, \cite[Theorem 10]{Consul-09}).   
 
We look for similar conditions on the parameter $\alpha$ establishing the smoothness of $\mathcal{M}_\alpha^s(t)$ for quiver $Q$ (see Theorem \ref{main theorem-1}).

\section{Hyper-cohomology for quiver bundles}
In this section and the rest we assume the quivers $Q=(V, A)$ to have the following property: between any two vertices $i_1$ and $i_2$, there exist atmost one arrow.
For a fixed stability parameter $\alpha = (\alpha_i)_{i \in V}$, let $E_\bullet =(E_i)_{i \in V}$ and $E'_\bullet =(E'_i)_{i \in V}$ be $Q$-bundles, and let $f_\bullet$ be a $Q$-morphism between $E_\bullet$ and $E'_\bullet$. We assume that the curves $C$ are the smooth complex projective curves unless otherwise specified. 

We first analyse $\text{Ext}^1(E_\bullet, E'_\bullet)$ by using the hyper-cohomology groups $\mathbb{H}^i(F^\bullet (E_\bullet, E'_\bullet))$ of a 2-step complex of vector bundles
\[
F^\bullet (E_\bullet, E'_\bullet) : F^0 \rightarrow F^1.
\]
This complex has terms
\begin{align*}
F^0 = \bigoplus_{i \in V} \text{Hom}({E}_i , E'_i) \quad \text{and} \quad F^1 = \bigoplus_{a \in A} \text{Hom}({E}_{at} , E'_{ah})
\end{align*}
and differential 
\[
b(\psi_i)_{i \in V} = \sum\limits_{a \in A} b_a (\psi_{ah}, \psi_{at}), \quad \text{for} \quad \psi_i \in \text{Hom}(E_i, E'_i),
\]
where $b_a : \text{Hom}(E_{ah}, E'_{ah}) \oplus \text{Hom}(E_{at}, E'_{at}) \rightarrow \text{Hom}(E_{at}, E'_{ah}) \hookrightarrow F^1$ is defined by \[ b_a(\psi_{ah}, \psi_{at}) = \psi_{ah} \circ \phi_a - \phi_a \circ \psi_{at}.\]

\noindent
By applying the cohomology functor to this complex of vector bundles, we get the linear maps of vector spaces
\begin{eqnarray*}
d = H^p(b) : H^p(F^0) \rightarrow H^p(F^1), 
\end{eqnarray*} 
for $p = 0,1$, where 
\[
H^p (F^0) = \bigoplus_{i \in V} \text{Ext}_{C}^p(E_i, E'_i), \quad H^p (F^1) = \bigoplus_{a \in A} \text{Ext}_{C}^p(E_{at}, E'_{ah})
\]
and 
\begin{align*}
d((\psi_i)_{i \in V}) = \sum\limits_{a \in A} d_a(\psi_{at}, \psi_{ah}), \quad \text{for} \quad \psi_{i} \in \text{Ext}_C^i (E_i, E'_i).
\end{align*}
Here, the map
\begin{eqnarray*}
d_a : \text{Ext}_C^p (E_{ah}, E'_{ah}) \oplus \text{Ext}_C^p(E_{at}, E'_{at})  \rightarrow \text{Ext}_C^p (E_{at}, E'_{ah}) \hookrightarrow H^0(F^1)
\end{eqnarray*}
is given by
\[
d_a (\psi_{ah}, \psi_{at}) = \psi_{ah} \circ {\phi'}_a - \phi_a \circ \psi_{at}.
\]

\begin{proposition}\label{proposition of isomorphism on hypercohomology}
There are natural isomorphisms
\begin{eqnarray*}
\text{Hom}(E_\bullet, E'_\bullet) \cong \mathbb{H}^0(F^\bullet(E_\bullet, E'_\bullet)),
\end{eqnarray*}
\begin{eqnarray*}
\text{Ext}^1(E_\bullet, E'_\bullet) \cong \mathbb{H}^1(F^\bullet(E_\bullet, E'_\bullet)),
\end{eqnarray*}
and an exact sequence
\begin{alignat*}{2}
        0 &\rightarrow  \mathbb{H}^0(F^\bullet(E_\bullet, E'_\bullet)) \rightarrow H^{0}(F^0) \rightarrow H^{0}(F^1) \rightarrow \mathbb{H}^1(F^\bullet(E_\bullet, E'_\bullet)) \\
        &\rightarrow H^{1}(F^0)\xrightarrow{d}  H^1(F^1) \rightarrow \mathbb{H}^2(F^\bullet(E_\bullet, E'_\bullet)) \rightarrow 0.
    \end{alignat*}  
\end{proposition}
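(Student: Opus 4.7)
I plan to first establish the long exact sequence and then identify the two hypercohomology groups in low degrees. The long exact sequence comes from the stupid-truncation short exact sequence of complexes
\[
0 \to F^1[-1] \to F^\bullet(E_\bullet, E'_\bullet) \to F^0 \to 0,
\]
where $F^0$ and $F^1[-1]$ are viewed as complexes concentrated in a single degree; applying hypercohomology produces a long exact sequence whose connecting maps are $H^p(b)$, and since $C$ is a smooth projective curve, $H^i$ of any coherent sheaf vanishes for $i \geq 2$, collapsing the long sequence to the six-term sequence in the statement. Equivalently, the hypercohomology spectral sequence $E_1^{p,q} = H^q(F^p) \Rightarrow \mathbb{H}^{p+q}(F^\bullet)$ has only the two columns $p = 0, 1$ nonzero, so it degenerates at $E_2$ and the six-term sequence can be read off directly.

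From the six-term sequence, $\mathbb{H}^0(F^\bullet) = \ker(H^0(F^0) \xrightarrow{d} H^0(F^1))$. An element of $H^0(F^0) = \bigoplus_{i\in V} \text{Hom}_C(E_i, E'_i)$ is a tuple $(\psi_i)_{i \in V}$, and $d$ sends it to the family $(\psi_{ah}\phi_a - \phi'_a \psi_{at})_{a \in A}$; the vanishing of this family is precisely the commutativity condition defining a $Q$-morphism $E_\bullet \to E'_\bullet$, giving the first isomorphism.

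The substantive step is $\mathbb{H}^1(F^\bullet) \cong \text{Ext}^1(E_\bullet, E'_\bullet)$, which I would prove by a direct comparison of \v{C}ech cocycles. Given a $Q$-extension $0 \to E'_\bullet \to G_\bullet \to E_\bullet \to 0$, choose an affine open cover $\mathcal{U} = \{U_\lambda\}$ over which every $G_i$ splits, together with local splittings $s_i^\lambda \colon E_i|_{U_\lambda} \to G_i|_{U_\lambda}$. The differences $s_i^\lambda - s_i^\mu$ assemble into a class in $H^1(F^0) = \bigoplus_i \text{Ext}^1_C(E_i, E'_i)$, and with respect to these splittings the quiver maps on $G_\bullet$ appear as $\bigl(\begin{smallmatrix}\phi'_a & \eta_a^\lambda\\ 0 & \phi_a\end{smallmatrix}\bigr)$ on each $U_\lambda$; the pair $((s_i^\lambda - s_i^\mu),\,(\eta_a^\lambda))$ is a 1-cocycle in the total \v{C}ech complex of $F^\bullet$, and coboundaries correspond to equivalences of $Q$-extensions, yielding a natural inverse bijection.

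The main obstacle is this last bicomplex bookkeeping, where one must simultaneously track extension data for each vertex bundle and the twist data for each arrow and verify that coboundaries in the total \v{C}ech complex of $F^\bullet$ correspond exactly to equivalences of $Q$-extensions. A more conceptual alternative is to identify $F^\bullet$ with a representative of $R\mathcal{H}om_Q(E_\bullet, E'_\bullet)$ arising from the standard two-term projective resolution of the quiver representation $E_\bullet$; then $\text{Ext}^i(E_\bullet, E'_\bullet) = \mathbb{H}^i(F^\bullet)$ follows by combining this resolution with a \v{C}ech (or injective) resolution for the global sections functor, and only the hypercohomology six-term sequence from the first step remains as an independent input.
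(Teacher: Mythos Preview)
Your argument is correct, but it differs from the paper in a simple and somewhat anticlimactic way: the paper's proof consists of the single sentence ``This follows from \cite[Theorem 4.1 and Theorem 5.1]{Gothen-King-2004},'' i.e.\ it defers entirely to Gothen and King. What you have sketched is essentially a self-contained version of what that reference does. Your derivation of the six-term sequence via the stupid truncation (or, equivalently, the degeneration of the two-column spectral sequence on a curve) is the standard route and matches the setup Gothen--King use; your identification of $\mathbb{H}^0$ as the kernel of $d$ is immediate; and your two options for $\mathbb{H}^1 \cong \mathrm{Ext}^1$---either the \v{C}ech/cocycle bookkeeping for $Q$-extensions, or the observation that $F^\bullet$ computes $R\mathcal{H}om$ via the standard two-term projective resolution of a representation of a quiver without relations---are exactly the content of Theorems~4.1 and~5.1 in \cite{Gothen-King-2004}. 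So your proposal is not a different proof so much as an unpacking of the cited one; the trade-off is that your write-up is self-contained and makes the role of the curve hypothesis (vanishing of $H^{\geq 2}$) explicit, at the cost of the bicomplex bookkeeping you flagged, whereas the paper avoids that work by invoking the literature.
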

\begin{proof}
 This follows form \cite[Theorem 4.1 and Theorem 5.1]{Gothen-King-2004}.
\end{proof}
For vector spaces $E$ and $E'$, we define 
\begin{eqnarray*}
h^i(E, E')= \dim (\text{Ext}^i_C(E, E')), \quad \chi(E, E') = h^0(E, E') - h^1(E, E').
\end{eqnarray*}
In similar way, for $Q$-quiver bundles $E_\bullet$, $E'_\bullet$, we define
\begin{eqnarray*}
h^i (E_\bullet, {E'}_{\bullet}) = \dim (\mathbb{H}^i (E_\bullet, E'_\bullet)), \quad \chi(E_\bullet, E'_\bullet) = h^0(E_\bullet, E'_\bullet) - h^1(E_\bullet, E'_\bullet) + h^2(E_\bullet, E'_\bullet).
\end{eqnarray*}

\begin{notation} Let $Q= (V, A)$ be a quiver. For  $Q$-bundles $E_\bullet = (E_i)_{i \in V}$ and  $E'_\bullet = (E'_i)_{i \in V}$ on a curve $C$, we denote the following notations: 
\[ r_i = \text{rk}(E_i), \quad {r'}_i = \text{rk}(E'_i), \quad \deg(E_i) = d_i \quad \text{and} \quad \deg(E'_i) = d'_i. \]
 \end{notation}
 \begin{proposition}\label{prop.4} Let $C$ be a complex projective curve with genus $g$. Let $E_\bullet= (E_i)_{i \in V}$ be a $Q$-bundles on $C$. Then
 \begin{eqnarray*}
 \begin{split}
 \chi(E_\bullet, E'_\bullet) & = \sum\limits_{i \in Q} \chi(E_i, E'_i) - \sum\limits_{a \in A}\chi(E_{at}, E'_{ah})\\
 & = (1-g) (\sum\limits_{i \in V} r_i r'_i - \sum\limits_{a \in A} r_{at}r'_{ah})  \\
 & \quad + \sum\limits_{i \in V} (r'_i d_i - r_i d'_i) + \sum\limits_{ a \in A} (r'_{at} d_{ah} - r_{ah} d'_{at}).
 \end{split}
 \end{eqnarray*}
\begin{proof}
From the long exact sequence in Proposition 2.1, we get 

\begin{eqnarray*}
 \chi(E_\bullet, E'_\bullet) & = \sum\limits_{i \in V} \chi(E_i, E'_i) - \sum\limits_{a \in A}\chi(E_{at}, E'_{ah}).\\
 \end{eqnarray*}
 The Riemann Roch formula
 \[\chi(E,F) = (1-g)\textrm{rk}(E)\textrm{rk}(F) + \textrm{rk}(E)\textrm{deg}(F) - \textrm{rk}(F)\textrm{deg}(E)\]
implies the second equality.
\end{proof}
 \end{proposition}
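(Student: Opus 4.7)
The strategy is to read off $\chi(E_\bullet,E'_\bullet)$ from the six--term long exact sequence supplied by Proposition \ref{proposition of isomorphism on hypercohomology} and then reduce to ordinary Riemann--Roch on the curve $C$.

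First, I would take the alternating sum of dimensions along
\[
0 \to \mathbb{H}^0 \to H^{0}(F^{0}) \to H^{0}(F^{1}) \to \mathbb{H}^1 \to H^{1}(F^{0}) \to H^{1}(F^{1}) \to \mathbb{H}^2 \to 0.
\]
All terms are finite dimensional, since $C$ is a smooth projective curve and each $\mathrm{Ext}^p_C$ vanishes for $p\geq 2$; in particular the sequence genuinely terminates as written (and $\mathbb{H}^p$ vanishes for $p\geq 3$ by the same dimension count, so no further terms are lost). Exactness forces the alternating sum of dimensions to vanish, and regrouping gives
\[
\chi(E_\bullet,E'_\bullet) = \bigl[\dim H^0(F^0)-\dim H^1(F^0)\bigr] - \bigl[\dim H^0(F^1)-\dim H^1(F^1)\bigr].
\]
Unpacking the direct sum decompositions of $F^0$ and $F^1$ and using the identification $H^p\bigl(\mathcal{H}om(E,E')\bigr)=\mathrm{Ext}^p_C(E,E')$ then yields the first equality
\[
\chi(E_\bullet,E'_\bullet) = \sum_{i\in V}\chi(E_i,E'_i) - \sum_{a\in A}\chi(E_{at},E'_{ah}).
\]

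For the second equality, I would substitute the Riemann--Roch formula
\[
\chi(E,F)=(1-g)\,\mathrm{rk}(E)\mathrm{rk}(F)+\mathrm{rk}(E)\deg(F)-\mathrm{rk}(F)\deg(E),
\]
obtained by applying $\chi(\mathcal{F})=\deg(\mathcal{F})+(1-g)\,\mathrm{rk}(\mathcal{F})$ to $\mathcal{F}=E^*\otimes F$, into each summand and collect the $(1-g)$--terms over vertices and arrows, the ``degree of $E$'' terms, and the ``degree of $E'$'' terms separately.

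There is no deep obstacle here; the argument is essentially a bookkeeping exercise once Proposition \ref{proposition of isomorphism on hypercohomology} is in hand. The only point requiring care is that $\chi(E,F)$ is not symmetric in its two arguments, so one must track the sign conventions (the vertex contributions enter with a plus sign and the arrow contributions with a minus sign) in order that the $r_i d'_i$ versus $r'_i d_i$ terms, and analogously the $r_{at}d'_{ah}$ versus $r'_{ah}d_{at}$ terms, appear with the correct signs in the final formula.
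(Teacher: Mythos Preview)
Your proposal is correct and follows essentially the same approach as the paper: the paper's proof also invokes the long exact sequence of Proposition~\ref{proposition of isomorphism on hypercohomology} to obtain the first equality and then applies the Riemann--Roch formula for $\chi(E,F)$ to obtain the second. Your write-up simply makes explicit the alternating-sum-of-dimensions argument and the finiteness/termination issues that the paper leaves implicit.
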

 
 \begin{corollary}
 For any extension $$0\to E'\to E \to E^{''} \to 0$$ of $Q$-bundles, 
 
 \[\chi(E, E) = \chi(E', E') + \chi(E^{''}, E^{''}) + \chi(E^{''}, E') + \chi(E', E^{''}).\]
 \end{corollary}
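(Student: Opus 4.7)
The strategy is to reduce the identity to the bilinearity of the Euler characteristic for ordinary vector bundles on $C$, and then repackage the result using the additive formula from Proposition~\ref{prop.4}. Nothing new about hypercohomology is needed; everything is already encoded in that formula.

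First I would apply Proposition~\ref{prop.4} to each of the four terms on the right-hand side: for any pair $(F_\bullet,G_\bullet)$ of $Q$-bundles,
\[
\chi(F_\bullet,G_\bullet) \;=\; \sum_{i\in V}\chi(F_i,G_i)\;-\;\sum_{a\in A}\chi(F_{at},G_{ah}).
\]
So each of $\chi(E',E')$, $\chi(E'',E'')$, $\chi(E',E'')$, $\chi(E'',E')$ becomes a sum over vertices minus a sum over arrows of Euler characteristics of pairs of vector bundles. Next, for each vertex $i\in V$ the short exact sequence $0\to E'_i\to E_i\to E''_i\to 0$ of vector bundles on $C$ induces long exact sequences in $\mathrm{Ext}^{\bullet}_C(-,-)$ in each slot, yielding the standard bilinearity
\[
\chi(E_i,E_i)=\chi(E'_i,E'_i)+\chi(E''_i,E''_i)+\chi(E'_i,E''_i)+\chi(E''_i,E'_i).
\]
For each arrow $a\in A$ the analogous identity
\[
\chi(E_{at},E_{ah})=\chi(E'_{at},E'_{ah})+\chi(E''_{at},E''_{ah})+\chi(E'_{at},E''_{ah})+\chi(E''_{at},E'_{ah})
\]
follows in the same way from the two exact sequences at the tail and the head.

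Summing the vertex identities over $V$ and subtracting the sum of the arrow identities over $A$, the left-hand side collapses by Proposition~\ref{prop.4} to $\chi(E,E)$, while the four groups of terms on the right-hand side collapse, again by Proposition~\ref{prop.4}, to $\chi(E',E')+\chi(E'',E'')+\chi(E'',E')+\chi(E',E'')$. The main (and only) thing to be careful about is the bookkeeping: one must check that the four pieces coming from the vertex and arrow identities regroup \emph{coherently}—i.e.\ that the $(F_\bullet,G_\bullet)$-summands at vertex $i$ and at the tail/head of each arrow $a$ correspond to the same pair of quiver bundles. This is automatic because the splittings at every place are induced by the single pair of short exact sequences $0\to E'_\bullet\to E_\bullet\to E''_\bullet\to 0$. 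There is no substantive obstacle; the corollary is essentially the statement that the formula of Proposition~\ref{prop.4} is bilinear in its two arguments.
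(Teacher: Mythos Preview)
Your argument is correct. The paper states this corollary without proof, immediately after Proposition~\ref{prop.4}, so the intended justification is precisely the bilinearity you spell out: $\chi(E_\bullet,E'_\bullet)$ is expressed in Proposition~\ref{prop.4} as a combination of ordinary Euler characteristics $\chi(E_i,E'_j)$, each of which is bilinear on short exact sequences, and your bookkeeping is right. One could shortcut even further by using the \emph{second} equality in Proposition~\ref{prop.4}: that formula depends only on the ranks $r_i,r'_i$ and degrees $d_i,d'_i$, and is visibly bilinear in $(r,d)$ and $(r',d')$; since rank and degree are additive in short exact sequences of vector bundles, the identity follows at once without invoking long exact $\mathrm{Ext}$ sequences. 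But this is a matter of taste, not a difference in substance.
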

In the next result, we investigate the vanishing criteria of $\mathbb{H}^0$ and $\mathbb{H}^2$ by putting some restrictions on the stability parameters $\alpha$. 
 
 \begin{proposition}
  Suppose $E_\bullet$ and $E'_\bullet$ are $\alpha$-semistable quiver bundles.
  \begin{enumerate}
      \item If $\mu_\alpha(E'_\bullet) < \mu_\alpha(E_\bullet)$, then $\mathbb{H}^0(F^\bullet(E_\bullet, E'_\bullet)) = 0.$
      \item If $\mu_\alpha(E'_\bullet) = \mu_\alpha(E_\bullet)$ and $E_\bullet$ is $\alpha$-stable, then
      \begin{equation*}
   \mathbb{H}^0(F^\bullet(E_\bullet, E'_\bullet))     = \begin{cases}
         \mathbb{C}, \quad \text{if} \quad  E_\bullet \cong E'_\bullet  \\
         0 \quad \quad \text{otherwise} 
        \end{cases}.
  \end{equation*}

  \end{enumerate}
 \end{proposition}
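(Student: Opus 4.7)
The plan is to reduce both statements to the classical Schur-type argument for semistable objects, carried out in the quiver setting. By Proposition \ref{proposition of isomorphism on hypercohomology}, $\mathbb{H}^0(F^\bullet(E_\bullet, E'_\bullet))$ is canonically identified with the space $\mathrm{Hom}(E_\bullet, E'_\bullet)$ of $Q$-bundle morphisms, so it suffices to analyse nonzero $Q$-morphisms $f_\bullet : E_\bullet \to E'_\bullet$ under the given slope hypotheses.

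The key construction attaches to any such $f_\bullet$ two natural sub-$Q$-bundles. Setting $K_i := \ker(f_i)$ yields a sub-$Q$-bundle $K_\bullet \subseteq E_\bullet$: on a smooth curve each $K_i$ is a subbundle of $E_i$, and the commutativity of $f_\bullet$ with the arrow maps forces $\phi_a(K_{at}) \subseteq K_{ah}$. Dually, the pointwise saturations $\tilde I_i$ of $\mathrm{im}(f_i)$ in $E'_i$ assemble into a sub-$Q$-bundle $\tilde I_\bullet \subseteq E'_\bullet$. Since $\mathrm{rk}(\tilde I_i) = \mathrm{rk}(E_i/K_i)$ and saturation only adds torsion (hence can only increase degree), one obtains the key estimate
\[
\mu_\alpha(\tilde I_\bullet) \;\geq\; \mu_\alpha(E_\bullet/K_\bullet).
\]
Part (1) is then immediate: $\alpha$-semistability of $E_\bullet$ applied to $K_\bullet$ gives $\mu_\alpha(E_\bullet/K_\bullet) \geq \mu_\alpha(E_\bullet)$, while $\alpha$-semistability of $E'_\bullet$ applied to $\tilde I_\bullet$ gives $\mu_\alpha(\tilde I_\bullet) \leq \mu_\alpha(E'_\bullet)$; the chain forces $\mu_\alpha(E_\bullet) \leq \mu_\alpha(E'_\bullet)$, contradicting the strict hypothesis, so $f_\bullet = 0$.

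For part (2), with equal slopes and $E_\bullet$ $\alpha$-stable, the same chain of inequalities must collapse to equalities whenever $f_\bullet \neq 0$; stability then forces $K_\bullet = 0$, so $f_\bullet$ is injective and realises $E_\bullet$ as a sub-$Q$-bundle of $E'_\bullet$ of equal type. Applying this reasoning to endomorphisms of the $\alpha$-stable $Q$-bundle $E_\bullet$ itself shows that every nonzero element of $\mathrm{End}(E_\bullet)$ is an isomorphism; an eigenvalue-shift at one fibre of some $E_i$ then produces a non-isomorphism in $\mathrm{End}(E_\bullet)$ which must therefore vanish, yielding $\mathrm{End}(E_\bullet) = \mathbb{C}$ and hence $\mathrm{Hom}(E_\bullet, E'_\bullet) = \mathbb{C}$ when $E_\bullet \cong E'_\bullet$. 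The main obstacle I anticipate is the saturation step: one must verify carefully that the image subsheaves and their pointwise saturations really assemble into a sub-$Q$-\emph{bundle} of $E'_\bullet$, and track the equality conditions tightly enough in the stable case to rule out nonzero morphisms when $E_\bullet \not\cong E'_\bullet$.
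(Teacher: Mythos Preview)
The paper states this proposition without proof; it is the standard Schur lemma for $\alpha$-semistable quiver bundles, and your kernel/saturated-image argument is precisely the classical route. Your plan is correct for part~(1) and for establishing $\mathrm{End}(E_\bullet)=\mathbb{C}$ in part~(2).

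The only genuine subtlety is the one you flag at the end. As the proposition is literally stated---with $E'_\bullet$ merely $\alpha$-semistable---the ``otherwise'' clause of~(2) can fail: if $E'_\bullet = E_\bullet \oplus E_\bullet$ then $\mathrm{Hom}(E_\bullet,E'_\bullet)\cong\mathbb{C}^2$ although $E_\bullet\not\cong E'_\bullet$. Your argument correctly shows that any nonzero $f_\bullet$ realises $E_\bullet$ as a saturated sub-$Q$-bundle of $E'_\bullet$ of the same $\alpha$-slope, but to force $f_\bullet$ to be an isomorphism one needs an extra hypothesis: either $E'_\bullet$ is also $\alpha$-stable, or $E_\bullet$ and $E'_\bullet$ share the same type $t$ (so that a saturated inclusion of full rank in each $E'_i$ is an equality). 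In the paper's moduli context both bundles lie in $\mathcal{M}_\alpha^s(t)$ and the same-type assumption is tacit; once you insert it, your argument goes through cleanly and your phrase ``of equal type'' becomes justified.
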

 
\begin{proposition}\label{prop.1}
 Let $E_\bullet$, $E'_\bullet$ be $Q$-bundles over $C$ and let $D \subset A$. For each $a \in D$, let $\epsilon_a \geqslant 0$. 
 \begin{enumerate}
 \item Suppose that the following conditions are satisfied:
  \begin{enumerate}
 \item[(a)] $E_\bullet$, $E'_\bullet$ are $\alpha$-semistable with $\mu_\alpha(E_\bullet)= \mu_\alpha(E'_\bullet)$,
 \item[(b)] For all $a \in A \setminus D$, $\alpha_{at} - \alpha_{ah} > 2g -2$,
 \item[(c)] For all $a \in D$, one of $E_\bullet$, $E'_\bullet$ is $\alpha - \epsilon_{at} u_{at}$-stable and $\alpha_{at} - \alpha_{ah} \geqslant 2g -2$. 
\end{enumerate}  
Then $\mathbb{H}^2(F^\bullet (E_\bullet, E'_\bullet)) = 0$.
\item If $E_\bullet$, $E'_\bullet$ are $\alpha$-semistable with the same $\alpha$-slope, and $\alpha_{at} - \alpha_{ah} > 2g -2$ for all $a \in A$, then $\mathbb{H}^2(F^\bullet (E_\bullet, E'_\bullet)) = 0$.

\item If one of $E_\bullet$, $E'_\bullet$ is $\alpha$-stable and the other one is $\alpha$-semistable with the same $\alpha$-slope, and $\alpha_{at} - \alpha_{ah} \geqslant 2g -2$, for all $a \in A$, then $\mathbb{H}^2(F^\bullet (E_\bullet, E'_\bullet)) = 0$.
\item If for all $a \in A$, $\phi_a$ is injective, or $\phi_a$ is generically surjective, then $\mathbb{H}^2(F^\bullet (E_\bullet, E'_\bullet)) = 0$.

 \end{enumerate}
 \end{proposition}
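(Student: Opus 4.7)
The plan is to reduce the vanishing of $\mathbb{H}^2$ to surjectivity of the differential $d\colon H^1(F^0)\to H^1(F^1)$, dualize via Serre duality on $C$, and then obstruct the resulting dual kernel using the hypotheses. Concretely, from Proposition \ref{proposition of isomorphism on hypercohomology} we have $\mathbb{H}^2(F^\bullet(E_\bullet,E'_\bullet)) \cong \operatorname{coker}(d)$, and Serre duality identifies
\[
H^1(F^0)^{*} \cong \bigoplus_{i\in V} H^0(\mathcal{H}om(E'_i, E_i\otimes K_C)), \qquad H^1(F^1)^{*} \cong \bigoplus_{a\in A} H^0(\mathcal{H}om(E'_{ah}, E_{at}\otimes K_C)),
\]
under which the transpose $d^{*}$ sends a family $(\xi_a)_{a\in A}$ to the family whose component at $i\in V$ is
\[
\sum_{a:\,ah=i}\phi_a\circ\xi_a \;-\; \sum_{a:\,at=i}\xi_a\circ\phi'_a.
\]
Hence the task reduces to showing that under each set of hypotheses any $(\xi_a)\in\ker d^{*}$ must vanish.

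Parts (2) and (3) are special cases of (1): take $D=\emptyset$ for (2); take $D=A$ with all $\epsilon_a=0$ for (3), noting that $\alpha-0\cdot u_{at}$-stability coincides with $\alpha$-stability. For (4) the argument is sheaf-theoretic: when $\phi_a$ is injective, the short exact sequence $0\to E_{at}\to E_{ah}\to\operatorname{coker}\phi_a\to 0$ and the functor $\mathcal{H}om(-,E'_{ah})$ give surjectivity of the arrow-$a$ component of $d$ on $H^1$, because the obstruction sheaf $\mathcal{E}xt^1(\operatorname{coker}\phi_a, E'_{ah})$ is torsion and hence has vanishing $H^1$; the generically surjective case is dual via $\phi'_a$.

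The crux is part (1). Assume for contradiction a non-zero $(\xi_a)\in\ker d^{*}$. I would use the family $(\xi_a)$ to construct a non-trivial sub $Q$-bundle $N_\bullet\subseteq E_\bullet$, with $N_i$ the saturation of $\sum_{a:\,at=i}\operatorname{im}\xi_a$ (implicitly absorbing the $K_C$-twist into rank and degree contributions), together with a non-trivial quotient $Q$-bundle $E'_\bullet\twoheadrightarrow M_\bullet$, so that the $\xi_a$ descend to a non-zero family of bundle maps $\eta_a\colon M_{ah}\to N_{at}\otimes K_C$; the vertex relations in $\ker d^{*}$ are precisely what make $N_\bullet$ closed under the $\phi_a$ and make $M_\bullet$ compatible with the $\phi'_a$. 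Summing Riemann--Roch slope estimates for the non-zero $\eta_a$'s against the $\alpha$-weights yields an inequality for $\mu_\alpha(N_\bullet)-\mu_\alpha(E_\bullet)$ whose sign is governed by $\alpha_{at}-\alpha_{ah}-(2g-2)$ summed over the arrows with $\eta_a\neq 0$: hypothesis (b) makes the contribution of arrows in $A\setminus D$ strictly positive, while hypothesis (c) upgrades the weak inequality on arrows in $D$ to strict using the $\alpha-\epsilon_{at}u_{at}$-stability of one of $E_\bullet$, $E'_\bullet$. The resulting strict inequality contradicts $\alpha$-semistability of the relevant bundle.

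The main obstacle will be making the construction of $N_\bullet$ and $M_\bullet$ sufficiently canonical: since $(\xi_a)$ is only a family of arrow-wise twisted maps rather than a bona fide $Q$-morphism, the vertex relations in $\ker d^{*}$ must be leveraged with care to guarantee that $N_\bullet$ is genuinely $\phi_a$-closed and $M_\bullet$ genuinely $\phi'_a$-compatible; then the arrow-wise slope contributions, the $K_C$-twist (worth $2g-2$), the vertex $\alpha$-weights, and the perturbation $\epsilon_{at}u_{at}$ must all combine cleanly into a single inequality violating semistability. The chain case of \cite[Theorem 3.8(vi)]{Consul-Prada-Schmitt-05} is the direct template.
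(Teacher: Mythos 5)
Your reduction of (2) and (3) to (1) is exactly the paper's, and your sketch of (4) via $0\to E_{at}\to E_{ah}\to\operatorname{coker}\phi_a\to 0$ and the vanishing of $H^1$ of a torsion sheaf is sound (the paper in fact states (4) but never proves it). The setup for (1) --- $\mathbb{H}^2\cong\operatorname{coker}(d)$, Serre duality, kill the dual kernel --- is also the paper's. But the heart of (1) is left as a plan, and the construction you propose is not the one that works. The paper does not build a single $\phi$-closed sub-$Q$-bundle out of saturations of $\sum_{a:\,at=i}\operatorname{im}\xi_a$; it argues one arrow at a time: from $(\phi_a\otimes\mathrm{id}_K)\circ\xi_a=0$ it places $I_{at}\otimes K^{*}:=\operatorname{im}(\xi_a)\otimes K^{*}$ at the \emph{single} vertex $at$ (zero at every other vertex), which is a sub-$Q$-bundle of $E_\bullet$ precisely because it lies in $\ker\phi_a$ --- not because images propagate along arrows; and from $\xi_a\circ\phi'_a=0$ it takes the sub-$Q$-bundle of $E'_\bullet$ equal to $\ker\xi_a$ at $ah$ and to all of $E'_j$ elsewhere. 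The two semistability inequalities for these objects, combined through the rank--degree bookkeeping of $0\to\ker\xi_a\to E'_{ah}\to\operatorname{im}\xi_a\to 0$, yield $(\alpha_{at}-\alpha_{ah})(r'_{ah}-k_{ah})\leqslant(r'_{ah}-k_{ah})(2g-2)$, forcing $\xi_a=0$ when $\alpha_{at}-\alpha_{ah}>2g-2$; for $a\in D$ the $\alpha-\epsilon_{at}u_{at}$-stability upgrades one of the two inequalities to a strict one, which is what rescues the borderline case $\alpha_{at}-\alpha_{ah}=2g-2$. None of this appears in your proposal beyond the assertion that the contributions should ``combine cleanly,'' and that combination is exactly where the hypotheses (b) and (c) enter; so this is a genuine gap rather than a different route.

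A second, related gap: your $\ker d^{*}$ consists of families satisfying, at each vertex $i$, only the \emph{sum} relation $\sum_{a:\,ah=i}(\phi_a\otimes 1)\circ\xi_a-\sum_{a:\,at=i}\xi_a\circ\phi'_a=0$, whereas the constructions above require the term-wise vanishing $(\phi_a\otimes1)\circ\xi_a=0$ and $\xi_a\circ\phi'_a=0$ for each arrow separately. You correctly flag this as the main obstacle but do not resolve it, and your proposed fix (saturating the images and hoping the vertex relations force $\phi$-closure) does not obviously produce sub- or quotient $Q$-bundles. To be fair, the paper elides the same point by asserting without proof that $d$ is surjective if and only if every $d_a$ is surjective --- automatic for chains, but in need of justification once several arrows share a head or a tail --- so on this issue your write-up at least identifies the difficulty that the paper passes over silently.
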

 
 \begin{proof}
 From the exact sequence in Proposition \ref{proposition of isomorphism on hypercohomology}, $\mathbb{H}^2(F^\bullet (E_\bullet, E'_\bullet)) = 0$ if and only if the map $d: H^1(F^0) \to H^1(F^1)$ is surjective if and only if $d_a$ is surjective for all $a \in A$. Using Serre duality, $d_a$ is surjective if and only if the map is given by 
 \[
 P_a: Hom_C(E'_{ah}, E_{at}\otimes K)  \to Hom_C(E'_{ah}, E_{ah}\otimes K) \oplus Hom_C(E_{at}, E'_{at}\otimes K)
 \]
 \noindent
 given by $P_a(\xi_a) = ((\phi_a \otimes id_K)\circ \xi_a, \xi_a\circ \phi'_a)$ is injective, where $\xi_a \in Hom_X(E'_{ah}, E_{at}\otimes K)$.
 
  \noindent
 Let $a \in A$ and $\xi_a: E'_{ah} \to E_{at}\otimes K$ be in $\text{ker}(P_a)$. For simplicity, we denote $\text{Im} (\xi_a)$ and $\text{ker}(\xi_a)$ by $I_{at}$ and $N_{ah}$ respectively. Note that
 $
I_{at} = \text{Im} (\xi_a) \subset E_{at}\otimes K$ and $N_{ah} = \text{ker}(\xi_a) \subset E'_{ah}$.
Then the fact that $\xi_a \in \text{ker} (P_a)$ is equivalent to the fact that the maps
  
\[
 E'_{ah} \xrightarrow{\xi_a} E_{at} \otimes K \xrightarrow{\phi_{a} \otimes \text{id}} E_{ah}\otimes K \quad\textrm{and} \quad E'_{at} \xrightarrow{\phi'_a} E'_{ah} \xrightarrow{\xi_a} E_{at}\otimes K
 \] 
 are both zero. Therefore,

\begin{itemize}
    \item The first map is zero if and only if $\text{Im}(\xi_a) = I_{at} \subset \text{ker} (\phi_a \otimes id)$ if and only if $I_{at} \otimes K^* \subset \text{ker}(\phi_a)$. So in this case, we have the following diagram  
       \[
       \begin{tikzcd}
    I_{at} \otimes K^* \arrow{r}\arrow[hookrightarrow]{d}  & 0 \arrow[hookrightarrow]{d} \\
    E_{at} \arrow{r}{\phi_{a}} & E_{ah}
    \end{tikzcd}.
    \]
Thus, it defines a sub $Q$-bundle $E_{\bullet I_{at}}$ of $E_\bullet$ by

\begin{equation*}
       (E_{\bullet I_{at}})_{i \in V} = 
       \begin{cases}
         I_{at} \otimes K^* \quad \text{if} \quad i = at \\
         0 \quad \quad \quad \quad  \; \text{if} \quad i \neq at
        \end{cases}.
  \end{equation*}

    \item The second map is zero if and only if $\text{Im} ({\phi}'_{a}) \subset N_{ah} = \text{ker} (\xi_a)$. That is, in this case we have the following diagram  
    \[
\begin{tikzcd}
E'_{at} \arrow{r}{\phi'_a }\arrow[equal]{d} & N_{ah}\arrow[hookrightarrow]{d} \\
E'_{at} \arrow{r}{\phi'_{a}} & E'_{ah}
\end{tikzcd}.
\]

\noindent
We therefore define a sub $Q$-bundle $E'_{\bullet N_{ah}}$ of $E'_\bullet$ as

\begin{equation*}
       (E'_{\bullet N_{ah}})_{i \in V} = \begin{cases}
         N_{ah} \quad \text{if} \quad i = ah \\
         E'_i \quad \;\; \text{if} \quad i \neq ah
        \end{cases}.
  \end{equation*}

\end{itemize}

\noindent
Let $k_{ah} = \textrm{rk}(N_{ah})$, $l_{ah} = \deg(N_{ah})$. Then

\[\mu_\alpha (E'_{\bullet N_{ah}}) = \frac{l_{ah} + \sum_{j \neq ah}d'_j + (\alpha_{ah} k_{ah} + \sum_{j \neq ah} \alpha_j r'_j)}{k_{ah} + \sum_{j \neq ah}r'_j}.\]
\noindent 
From the short exact sequence 
\[
0 \to N_{ah} \to E'_{ah} \to I_{at} \to 0, 
\]
we see that 
\[
\textrm{rk}(I_{at}\otimes K^{*}) = \textrm{rk}(I_{at}) = r'_{ah} - k_{ah}
\]and 
\[
\textrm{deg}(I_{at}\otimes K^{*}) = \textrm{deg}(I_{at}) + \textrm{deg}(K^{*})\textrm{rk}(I_{at}) = d'_{ah} - l_{ah} + (2-2g)(r'_{ah} - k_{ah}).
\]
Hence,
 \begin{eqnarray*}
 \begin{split}
 \mu_\alpha (E'_{\bullet I_{at}}) & = \frac{\textrm{deg}(I_{at}\otimes K^{*}) + \alpha_{at}\textrm{rk}(I_{at}\otimes K^{*})}{\textrm{rk}(I_{at}\otimes K^{*})} \\
& = \frac{d'_{ah} - l_{ah}}{r'_{ah} - k_{ah}} + 2-2g + \alpha_{at}.
 \end{split}
 \end{eqnarray*}
Using the formula for $\mu_\alpha (E'_{\bullet I_{at}})$ and $\mu_\alpha (E'_{\bullet N_{ah}})$, 
we obtain
\begin{eqnarray}\label{relneqn}
\begin{split}
   & (k_{ah} + \sum_{j \neq ah}r_j)\mu_\alpha (E'_{\bullet N_{ah}}) + (r'_{ah} - k_{ah})\mu_\alpha (E_{\bullet I_{at}}) = \\ 
& \sum_{i \in V}d'_i + (r'_{ah} - k_{ah})(2-2g) + (\alpha_{ah}-\alpha_{at})k_{ah} + (\sum_{j \neq ah}\alpha_j r'_j + \alpha_{at}r'_{ah}).
\end{split}
\end{eqnarray}
    
For all $a \in A$, to prove $P_a$ is injective. In other words, to prove $\textrm{ker}(P_a) = 0$, it suffices to show that for arbitrary element $\xi_a \in \textrm{ker}(P_a)$,  $\xi_a = 0$.

    $\text{Proof of}\; (1)$. First we prove for $a \in A \setminus D$, so $\alpha_{at}- \alpha_{ah} > 2g - 2$. 
Since $E_{\bullet}$ and $E'_{\bullet}$ are $\alpha$-semistable, we have $\mu_{\alpha} (E'_{\bullet N_{ah}}) \leq \mu_{\alpha} (E'_{\bullet})$, $\mu_{\alpha} (E_{\bullet I_{at}}) \leq \mu_{\alpha} (E_{\bullet})$ and $\mu_{\alpha}(E_{\bullet}) = \mu_{\alpha}(E'_{\bullet})$, therefore now \eqref{relneqn} is smaller than and equal to

\begin{multline*}
    (k_{ah} + \sum_{j \neq ah} r'_j)\mu_{\alpha}(E'_{\bullet}) + (r'_{ah} - k_{ah})\mu_{\alpha}(E_{\bullet}) = (\sum_{j \in V}r'_{j})\mu_{\alpha}(E'_{\bullet}) = \sum_{j \in V} d'_j + \sum_{j \in V}\alpha_j r'_j. 
\end{multline*}
Thus we get 

\begin{equation*}
\sum_{j \in V}d'_j + (r'_{ah} - k_{ah})(2-2g) + (\sum_{j \neq ah}\alpha_j r'_j + \alpha_{at}r'_{ah}) \leq \sum_{j \in V} d'_j + \sum_{j \in V}\alpha_j r'_j 
\end{equation*} 
if and only if
\begin{eqnarray}\label{leqn-2}
(\alpha_{at} - \alpha_{ah})(r'_{ah} - k_{ah}) \leq (r'_{ah} - k_{ah})(2g-2)
\end{eqnarray}
Now, for $a \in A \setminus D$, let $\xi_a \neq 0$. Then $\textrm{rk}(I_{at}) > 0$. This implies that $r'_{ah} - k_{ah} > 0$. Thus the inequality \ref{leqn-2} gives $\alpha_{at} - \alpha_{ah} \leq 2g-2$ which contradicts our assumption that $\alpha_{at}- \alpha_{ah} > 2g - 2$. Therefore,  $\xi_a = 0$ for all $a \in A \setminus D$.

Now let $a \in D$. Then $E_{\bullet}$ is $\alpha_{\bullet}$-stable where $\alpha_a = \alpha + \epsilon_{at}u_{at}$. As $E_{\bullet I_{at}}$ is a proper quiver subbundle of $E_{\bullet}$, we get $\mu_{\alpha_a}(E_{\bullet I_{at}}) < \mu_{\alpha_a}(E_{\bullet})$.
Define $\lambda_a(E_{\bullet}) = \frac{r_{at}}{\sum_{j \in V}r_j}$. The strict inequality
\[
\mu_{\alpha_a}(E_{\bullet I_{at}}) < \mu_{\alpha_a}(E_{\bullet})
\]
gives the inequality
\begin{equation*}
    \mu_{\alpha}(E_{\bullet I_{at}}) + \epsilon_{at} < \mu_{\alpha}(E_{\bullet}) + \frac{\epsilon_{at}r_{at}}{\sum_{j \in V}r_j}.
\end{equation*}
This is equivalent to 
\begin{eqnarray*}
    \mu_{\alpha}(E_{\bullet I_{at}}) - \mu_{\alpha}(E_{\bullet}) < \epsilon_{at}\lambda_a(E_{\bullet}) - \epsilon_{at}.
\end{eqnarray*}
Therefore, $\mu_{\alpha}(E_{\bullet I_{at}}) < \mu_{\alpha}(E_{\bullet})$ as $\epsilon_at \geq 0$. By using this strict inequality, (\ref{leqn-2}) becomes $\alpha_{at} - \alpha _{ah} < 2g-2$. As in the case of $a \in A \setminus D$, this yields contradiction as for all $a \in D$ we have $\alpha_{at} - \alpha _{ah} \geq 2g -2 $. This establishes $\xi_a = 0$ for all $a \in D$ and thus that $\textrm{ker}(P_a)  = 0$ for all $a \in D$.

    $\text{Proof of}\; (2). \;$ For $D = \emptyset$, we see that (1) implies (2).

     $\text{Proof of}\; (3). \;$ For $D = A$ and all $\epsilon_{at} = 0$ ($a \in A$), we see that (1) implies (3).

 \end{proof}

 \begin{corollary}\label{corollary-1}
 Suppose that $E_\bullet$, $E'_\bullet$ are $\alpha$-semistable $\mathcal{Q}$-quiver with the same $\alpha$-slope. Let $\alpha_{at} - \alpha_{ah} > 2g -2$ for all $a \in A$. Then
 \begin{eqnarray*}
 \dim \text{Ext}^1(E_\bullet, E'_\bullet) = h^0 (E_\bullet), E'_\bullet) - \chi (E_\bullet), E'_\bullet).
\end{eqnarray*}   
 \end{corollary}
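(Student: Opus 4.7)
The plan is to observe that this corollary follows almost immediately from combining Proposition~\ref{prop.1}(2) with the hypercohomology identification in Proposition~\ref{proposition of isomorphism on hypercohomology}, together with the definition of the Euler characteristic $\chi(E_\bullet, E'_\bullet)$. First, I would apply Proposition~\ref{proposition of isomorphism on hypercohomology} to replace $\text{Ext}^1(E_\bullet, E'_\bullet)$ with $\mathbb{H}^1(F^\bullet(E_\bullet, E'_\bullet))$, so that $\dim \text{Ext}^1(E_\bullet, E'_\bullet) = h^1(E_\bullet, E'_\bullet)$. Next I would unfold the definition $\chi(E_\bullet, E'_\bullet) = h^0(E_\bullet, E'_\bullet) - h^1(E_\bullet, E'_\bullet) + h^2(E_\bullet, E'_\bullet)$ and rearrange it to read
\[
h^1(E_\bullet, E'_\bullet) = h^0(E_\bullet, E'_\bullet) - \chi(E_\bullet, E'_\bullet) + h^2(E_\bullet, E'_\bullet).
\]

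The crux of the argument is then to invoke Proposition~\ref{prop.1}(2): since $E_\bullet$ and $E'_\bullet$ are $\alpha$-semistable with the same $\alpha$-slope and $\alpha_{at} - \alpha_{ah} > 2g - 2$ for every $a \in A$, that proposition applies and yields $\mathbb{H}^2(F^\bullet(E_\bullet, E'_\bullet)) = 0$, i.e.\ $h^2(E_\bullet, E'_\bullet) = 0$. Plugging this vanishing into the rearranged identity produces exactly
\[
\dim \text{Ext}^1(E_\bullet, E'_\bullet) = h^0(E_\bullet, E'_\bullet) - \chi(E_\bullet, E'_\bullet),
\]
which is the claimed formula.

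There is no genuine obstacle here; the hard work has already been done inside Proposition~\ref{prop.1}(2), and the corollary is essentially a translation of that vanishing into a numerical statement about $\text{Ext}^1$. The only point I would be careful to verify is that the paper's sign convention for the Euler characteristic of quiver bundles is the alternating sum $\chi = h^0 - h^1 + h^2$ (as set out just before Proposition~\ref{prop.4}), so that the rearrangement $h^0 - \chi = h^1 - h^2$ used above is the correct one; once this is confirmed, the derivation is a one-line computation.
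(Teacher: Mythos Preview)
Your proposal is correct and matches the paper's implicit argument: the corollary is stated immediately after Proposition~\ref{prop.1} with no separate proof, so it is meant to follow exactly as you describe, by combining the vanishing of $\mathbb{H}^2$ from Proposition~\ref{prop.1}(2) with the identification $\text{Ext}^1 \cong \mathbb{H}^1$ from Proposition~\ref{proposition of isomorphism on hypercohomology} and the definition $\chi = h^0 - h^1 + h^2$.
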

 
  \section{Deformation theory and smoothness conditions}
In this section, we prove the main result (Theorem \ref{main theorem-1}) of the article with the suitable conditions on the parameters by using the deformation theory.

Let $\mathbb{C}[\varepsilon]/(\varepsilon^2)$ be the ring of dual numbers over $\mathbb{C}$ and denoted it by $\mathbb{C}[\varepsilon]$. We use the symbol $C_\varepsilon$ for denoting the fiber product $C \times \text{Spec}(\mathbb{C}[\varepsilon])$. Let $p_\varepsilon : C_\varepsilon \rightarrow C$ be the first projection morphism, and $E$ be a vector bundle on $C$. Then $E[\varepsilon] := p^*_\varepsilon (E)$. 

\begin{definition}\label{def of infinitesimal def on quiver}
Let $Q=(V, A)$ be a quiver. A linear infinitesimal deformation of a $Q$-bundle $(E_\bullet, \phi_\bullet)$ is a pair $(E_{\bullet \varepsilon}, \phi_{\bullet \varepsilon})$, where for each $i \in V$, $E_{i \varepsilon}$ is a vector bundle on $C_\varepsilon$, and for each $a \in A$, $\phi_{a \varepsilon} : E_{at \varepsilon} \rightarrow E_{ah \varepsilon}$ is a morphism, and for each $i \in V$, $E_i \cong E_{i \varepsilon} \otimes_{\mathbb{C}[\varepsilon]} \mathbb{C}$ such that $\phi_{a\varepsilon} \otimes_{\mathbb{C}[\varepsilon]} \mathbb{C}$ goes over into $\phi_a$.
\end{definition}
An isomorphism two between infinitesimal deformations $(E'_{\bullet \varepsilon}, \phi'_{\bullet\varepsilon})$ is defined as an isomorphism $\lambda_{i \varepsilon} : E'_{i \varepsilon} \rightarrow E''_{i \varepsilon}$ restricting to the identity on $E$ such that the following diagram is commutative:

\[
\begin{tikzcd}[column sep={15.5em,between origins},row sep=6em]
E'_{at \varepsilon} \arrow{r}{\phi'_{a \varepsilon}} \arrow[swap]{d}{\lambda_{at \varepsilon}} & E'_{ah \varepsilon} \arrow{d}{\lambda_{ah \varepsilon}} \\
E''_{at \varepsilon} \arrow{r}{\phi''_{a \varepsilon}}  & E''_{ah \varepsilon} \
\end{tikzcd}.
\]
 
        Consider the following complex of $Q$-bundles on $C$. For a short notation, it is denoted by $[\cdot, \;\phi_\bullet]$:
        $
         F^0 \xrightarrow{[\cdot,\; \phi_\bullet]} F^1
        $,
        where for every section $\alpha_i$ on $F^0$ over an open set $U \subset C$, we have $\alpha_{at} \circ \phi_a - \phi_a \circ \alpha_{ah}$. 
        
        \begin{proposition}\label{prop.2}
        The isomorphism classes of linear infinitesimal deformations of the pair $(E_\bullet, \phi_\bullet)$ are canonically parametrized by the first hypercohomology group $\mathbb{H}^1([\cdot, \phi_\bullet])$ of the complex $[\cdot, \phi_\bullet]$. 
        
        \begin{proof}
        Let $(E_{\bullet \varepsilon}, \phi_{\bullet \varepsilon})$ be an infinitesimal deformation of $(E_\bullet, \phi_\bullet)$. Consider open affine covering $\mathcal{U} = (\mathcal{U}_{\alpha} = \text{Spec}A_\alpha)_{\alpha \in I}$  of $C$. For $\alpha, \beta \in I$, we write $\mathcal{U}_{\alpha \beta} = \mathcal{U}_\alpha \cap \mathcal{U}_\beta = \text{Spec} A_{\alpha \beta}$. Then $\mathcal{U}_\alpha[\varepsilon] = \text{Spec} A_\alpha [\varepsilon]$ is an open affine covering of $C_\varepsilon$. For each $i \in V$, we denote $H^0 (\mathcal{U}_\alpha, E_i)$ and $H^0 (\mathcal{U}_{\alpha \beta}, E_i)$ by $M_{i \alpha}$ and $M_{i \alpha \beta}$ respectively.
        
        As $E_{i \varepsilon}$ is a vector bundle on $C_\varepsilon$, for every $\mathcal{U}_\alpha$, we have an isomorphism $f_{i \alpha} : {E_{i \varepsilon}}_{\vert_{\mathcal{U}_\alpha[\varepsilon]}} \xrightarrow{\sim} \widetilde{M_{i \alpha}}[\epsilon]$. Therefore, for the open sets $\mathcal{U}_{\alpha \beta}$, it induces the isomorphism ${f_{i \alpha}}_{\vert_{\mathcal{U}_{\alpha \beta}[\varepsilon]}} \circ {f^{-1}_{i \beta}}_{\vert_{\mathcal{U}_{\alpha \beta}[\varepsilon]}} : \widetilde{M_{i \beta \alpha}}[\varepsilon] \rightarrow \widetilde{M_{i \alpha \beta}}[\varepsilon]$, and that corresponds to an element in $\oplus_{a \in A}(\text{Hom} (\widetilde{M_{ah \alpha \beta}}[\varepsilon], \widetilde{M_{at \alpha \beta}}[\varepsilon]))$ of the form $1 + \varepsilon \oplus_{a \in A}\zeta_{a \alpha \beta}$. Moreover,  $\oplus_{a \in A}\zeta'_{a \alpha \beta} +\oplus_{a \in A}\zeta''_{a \alpha \beta}$ corresponds to the composite automorphism $(1 + \varepsilon \oplus_{a \in A}\zeta'_{a \alpha \beta}) \circ (1 + \varepsilon \oplus_{a \in A}\zeta''_{a \alpha \beta}) = 1 + \varepsilon \oplus_{a \in A}(\zeta'_{i \alpha \beta} + \zeta''_{i \alpha \beta})$, since $\varepsilon^2 = 0$. Now, one can see that the element $\zeta_{a \alpha \beta}$ defines a 1-cocycle for $\text{Hom}(E_{ah}, E_{at})$, and hence it gives an element of $H^1(F^1)$.
        
        Conversely, given a 1-cocycle $\zeta_{a \alpha \beta}$ with values in $(\text{Hom}(E_{ah}, E_{at}))_{a \in A}$ the corresponding sheaf $\text{Hom}(E_{ah \varepsilon}, E_{ah \varepsilon})$ may be determined by gluing sheaves $\widetilde{M_{i \alpha}}[\varepsilon]$ and $\widetilde{M_{i \beta}}[\varepsilon]$ with the open set $\mathcal{U}_{\alpha \beta}$ by the isomorphism 
        $ {\widetilde{M_{i \alpha}}[\varepsilon]}_{\vert_{U_{\alpha \beta}[\varepsilon]}} \rightarrow {\widetilde{M_{i \beta}}[\varepsilon]}_{\vert_{U_{\alpha \beta}[\varepsilon]}}$.

        Note that the infinitesimal deformation $(E_{\bullet \varepsilon}, \phi_{\bullet \varepsilon})$ of $(E_\bullet, \phi_\bullet)$, $\phi_{\bullet \varepsilon}$ may be described by giving homomorphisms 
        \[
        \phi_{a \varepsilon \alpha} : M_{at \alpha}[\varepsilon] \rightarrow M_{ah \alpha}[\varepsilon]
        \]
        which restrict to $\phi_a$ modulo $\varepsilon$ and that are compatible with gluing the isomorphisms.

        In other words, this means that $\phi_{a \varepsilon \alpha} = \phi_{a} + \varepsilon \mu_{a \alpha}$, for some homomorphism $\mu_{a \alpha}:M_{at \alpha} \rightarrow M_{ah \alpha}$, for each $\alpha, \beta$, we have following commutative diagram 
        
\[
\begin{tikzcd}[column sep={15.5em,between origins},row sep=6em]
M_{at \alpha \beta} [\varepsilon] \arrow{r} \arrow[swap]{d}{1 + \varepsilon \zeta_{a \alpha \beta}} & M_{at \alpha \beta} [\varepsilon] \arrow{d}{1 + \varepsilon \zeta_{a \alpha \beta}} \\
M_{ah \alpha \beta} [\varepsilon] \arrow{r}  & M_{ah \alpha \beta} [\varepsilon]
\end{tikzcd}.
\]
By replacing the expressions of $\phi_{\varepsilon \alpha \beta}$ given above, it follows from definition \ref{def of infinitesimal def on quiver} that

\[
(\oplus_{a \in A}\lambda_{a \beta} - \oplus_{a \in A}\lambda_{a \alpha})_{|_{\mathcal{U_{\alpha \beta}[\varepsilon]}}} = [\oplus_{a \in A}\zeta_{a \alpha \beta}, \phi_\bullet]
\] 
   
   If $(E'_{\bullet \varepsilon}, \phi'_{\bullet \varepsilon})$ is another infinitesimal deformation of $(E_\varepsilon, \phi_\varepsilon)$ is isomorphic to $(E_{\bullet \varepsilon}, \phi_{\bullet \varepsilon})$ and if $(\lbrace \mu'_{a \alpha} \rbrace, \lbrace \zeta'_{i \alpha \beta} \rbrace)$ is the pair constructed from $(E'_{\bullet \varepsilon}, \phi'_{\bullet \varepsilon})$ as $(\lbrace \mu_{a \alpha} \rbrace, \lbrace \zeta_{i \alpha \beta} \rbrace)$ from $(E_{\bullet \varepsilon}, \phi_{\bullet \varepsilon})$, then it follows that 
   
   \[
   \mu'_{a \alpha} = \mu_{a \alpha} +[\lambda_{a \alpha}, \phi_a] \quad \text{and} \quad \zeta'_{a \alpha \beta} = \zeta_{a \alpha \beta} + (\lambda _{ a \beta} - \lambda_{a \alpha}).
   \]

   Now, let us take the same open covering $\mathcal{U} = (\mathcal{U}_\alpha)$ of the curve $C$ as a \v{C}ech covering to calculate the cohomology, then the pair $(\lbrace \lambda_{a \alpha} \rbrace, \lbrace \zeta_{a \alpha \beta} \rbrace)$ associated to $[\cdot, \phi_\bullet]$ defines an element of $\mathbb{H}^1([\cdot, \phi_\bullet])$ of the complex $[\cdot, \phi_\bullet]$ that depends only on the isomorphism classes of infinitesimal deformations $(E_{\bullet \varepsilon}, \phi_{\bullet \varepsilon})$ of $(E_{\bullet}, \phi_{\bullet})$. This proves the result.
        \end{proof}
        \end{proposition}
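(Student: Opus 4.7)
The plan is to adapt the classical \v{C}ech approach to infinitesimal deformations of vector bundles, extended to incorporate deformations of the morphisms $\phi_a$, and to identify the resulting gluing data with hypercocycles for the two-term complex $[\cdot, \phi_\bullet]$. I fix an affine open cover $\mathcal{U} = \{\mathcal{U}_\alpha\}$ of $C$, fine enough that each $E_i$ (hence each pullback $E_{i\varepsilon}$, since $E_{i\varepsilon}$ is a flat deformation and the obstruction to trivializing it on an affine is nil) admits a trivialization on every $\mathcal{U}_\alpha[\varepsilon]$. On a smooth curve we may take $\mathcal{U}$ finite and Leray for the relevant sheaves, so ordinary \v{C}ech cohomology computes hypercohomology of $[\cdot,\phi_\bullet]$.

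Given a linear infinitesimal deformation $(E_{\bullet\varepsilon}, \phi_{\bullet\varepsilon})$, I would fix trivializations $f_{i\alpha} : E_{i\varepsilon}|_{\mathcal{U}_\alpha[\varepsilon]} \xrightarrow{\sim} \widetilde{M_{i\alpha}}[\varepsilon]$ reducing to the identity modulo $\varepsilon$. On overlaps the transition maps $f_{i\alpha}\circ f_{i\beta}^{-1}$ take the form $\mathrm{id} + \varepsilon \zeta_{i\alpha\beta}$ with $\zeta_{i\alpha\beta} \in \Gamma(\mathcal{U}_{\alpha\beta}, \mathrm{Hom}(E_i, E_i))$, and because $\varepsilon^2 = 0$ the composition law forces $\{\zeta_{i\alpha\beta}\}_{i\in V}$ to be a \v{C}ech $1$-cocycle with values in $F^0$. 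In the same trivializations the morphisms $\phi_{a\varepsilon}$ are given by $\phi_a + \varepsilon \mu_{a\alpha}$ for some $\mu_{a\alpha} \in \Gamma(\mathcal{U}_\alpha, \mathrm{Hom}(E_{at}, E_{ah}))$. Imposing that $\phi_{a\varepsilon}$ be well-defined on overlaps, i.e.\ that it commute with the transition maps at both source and target, yields after an $\varepsilon^2=0$ computation the identity
\[
\mu_{a\beta} - \mu_{a\alpha} = [\zeta_{\bullet\alpha\beta},\phi_a] \quad \text{on } \mathcal{U}_{\alpha\beta}.
\]
This is precisely the hypercocycle condition for a \v{C}ech $1$-hypercocycle of $[\cdot,\phi_\bullet] : F^0 \to F^1$, so the pair $(\{\mu_{a\alpha}\}, \{\zeta_{i\alpha\beta}\})$ defines a class in $\mathbb{H}^1([\cdot,\phi_\bullet])$.

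Next I would check independence of choices. A different set of trivializations differs by $\mathrm{id} + \varepsilon \lambda_{i\alpha}$ for local sections $\lambda_{i\alpha}$ of $F^0$, and a direct computation shows this alters $(\mu_{a\alpha}, \zeta_{i\alpha\beta})$ exactly by the hypercoboundary of $\{\lambda_{i\alpha}\}$, namely $\mu_{a\alpha} \mapsto \mu_{a\alpha} + [\lambda_{\bullet\alpha},\phi_a]$ and $\zeta_{i\alpha\beta} \mapsto \zeta_{i\alpha\beta} + \lambda_{i\beta} - \lambda_{i\alpha}$. Any isomorphism of deformations can be encoded in this same form, so the resulting class depends only on the isomorphism class. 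For surjectivity I run the construction in reverse: given a hypercocycle, glue the $\widetilde{M_{i\alpha}}[\varepsilon]$ along $\mathrm{id} + \varepsilon \zeta_{i\alpha\beta}$ to obtain $E_{i\varepsilon}$, then use $\phi_a + \varepsilon \mu_{a\alpha}$ locally to define $\phi_{a\varepsilon}$; the hypercocycle identity is precisely what is needed for these local morphisms to patch to a global $\phi_{a\varepsilon}$ on $C_\varepsilon$. Injectivity on isomorphism classes follows from the coboundary analysis.

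The main obstacle I expect is bookkeeping around sign conventions: one must fix the differential $[\cdot,\phi_\bullet]$ so that the compatibility constraint for $\phi_{a\varepsilon}$ under the transition maps translates \emph{verbatim} into the \v{C}ech hypercoboundary relation, and one must verify the argument uniformly in all arrows $a \in A$ simultaneously (so that the single class in $\mathbb{H}^1$ captures every $\phi_a$ at once). Once the conventions are set, the remainder reduces to the standard deformation theory of a single vector bundle, applied vertex by vertex, glued together by the hypercohomology formalism.
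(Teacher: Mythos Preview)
Your proposal is correct and follows essentially the same \v{C}ech-theoretic strategy as the paper: choose an affine cover, trivialize each $E_{i\varepsilon}$ locally, extract transition cocycles of the form $\mathrm{id}+\varepsilon\zeta$ and local expressions $\phi_a+\varepsilon\mu_{a\alpha}$ for the deformed maps, and verify that the overlap compatibility and change-of-trivialization relations are exactly the \v{C}ech hypercocycle and hypercoboundary conditions for the two-term complex $[\cdot,\phi_\bullet]$. Your bookkeeping is in fact tidier than the paper's own proof: you correctly place $\zeta_{i\alpha\beta}$ in $F^0=\bigoplus_{i\in V}\mathrm{Hom}(E_i,E_i)$ (indexed by vertices) and $\mu_{a\alpha}$ in $F^1$ (indexed by arrows), whereas the paper's write-up conflates the vertex and arrow indices in a few places.
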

        
 \begin{corollary}\label{prop.3}
 The tangent space $T_{(E_{\bullet}, \phi_{\bullet})} \mathcal{M}_\alpha^s(t)$ to $\mathcal{M}_\alpha^s(t)$ at the point $(E_{\bullet}, \phi_{\bullet})$ is canonically isomorphic to $\mathbb{H}^1([\cdot, \phi_\bullet])$.
 \end{corollary}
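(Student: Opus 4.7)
The plan is to deduce this corollary directly from Proposition \ref{prop.2} via the standard principle that the tangent space to a moduli space at a point parametrizes isomorphism classes of first-order deformations of the corresponding object. Concretely, by definition $T_{(E_\bullet,\phi_\bullet)}\mathcal{M}_\alpha^s(t)$ is the set of morphisms $\mathrm{Spec}(\mathbb{C}[\varepsilon]) \to \mathcal{M}_\alpha^s(t)$ sending the unique closed point to $(E_\bullet,\phi_\bullet)$, endowed with its natural $\mathbb{C}$-linear structure.

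First I would invoke the moduli interpretation of $\mathcal{M}_\alpha^s(t)$ coming from the GIT construction in \cite{Consul-09}: such a morphism corresponds to an isomorphism class of a flat family of $\alpha$-semistable $Q$-bundles of type $t$ over $C_\varepsilon = C \times \mathrm{Spec}(\mathbb{C}[\varepsilon])$ whose restriction to the closed fiber $C$ is $(E_\bullet,\phi_\bullet)$. Since the points at which one has a genuine tangent direction correspond to $\alpha$-stable bundles (automorphism group reduced to scalars, so deformations are unobstructed by nontrivial stabilizers), and stability is an open condition, each such flat family is exactly a linear infinitesimal deformation of $(E_\bullet,\phi_\bullet)$ in the sense of Definition \ref{def of infinitesimal def on quiver}, and conversely. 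Applying Proposition \ref{prop.2} then identifies the set of such isomorphism classes with $\mathbb{H}^1([\cdot,\phi_\bullet])$, yielding the desired bijection.

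The main point to verify is that this bijection is an isomorphism of $\mathbb{C}$-vector spaces, not merely of sets. On the tangent-space side, addition is induced by the ring map $\mathbb{C}[\varepsilon] \to \mathbb{C}[\varepsilon_1,\varepsilon_2]/(\varepsilon_1^2,\varepsilon_2^2,\varepsilon_1\varepsilon_2)$, $\varepsilon \mapsto \varepsilon_1+\varepsilon_2$, while scalar multiplication by $c\in\mathbb{C}$ is induced by $\varepsilon \mapsto c\varepsilon$. Under the \v{C}ech description built in the proof of Proposition \ref{prop.2}, the former operation sends a pair of representing cocycles $(\{\zeta'_{a\alpha\beta}\},\{\mu'_{a\alpha}\})$ and $(\{\zeta''_{a\alpha\beta}\},\{\mu''_{a\alpha}\})$ to their sum, because composition of transition automorphisms satisfies $(1+\varepsilon\zeta')(1+\varepsilon\zeta'') = 1+\varepsilon(\zeta'+\zeta'')$ modulo $\varepsilon^2$, and analogously the deformed morphisms $\phi_a + \varepsilon\mu_a$ add as expected; scalar multiplication is compatible for the same reason. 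Hence the bijection is $\mathbb{C}$-linear, which is the only genuine obstacle and completes the canonical identification $T_{(E_\bullet,\phi_\bullet)}\mathcal{M}_\alpha^s(t) \cong \mathbb{H}^1([\cdot,\phi_\bullet])$.
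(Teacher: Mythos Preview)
Your proposal is correct and follows essentially the same route as the paper: the paper states this as an immediate corollary of Proposition~\ref{prop.2} without further argument, relying implicitly on the standard identification of the Zariski tangent space of a moduli space with isomorphism classes of first-order deformations. Your write-up simply makes explicit the moduli-theoretic interpretation of $\mathrm{Spec}(\mathbb{C}[\varepsilon])$-points and the $\mathbb{C}$-linearity of the resulting bijection, which is more than the paper itself spells out.
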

 \begin{remark}
Passing to cohomology of the complex $[\cdot, \phi_\bullet]$, we have
\begin{alignat*}{2}
        0 &\rightarrow  \mathbb{H}^0([\cdot, \phi_\bullet]) \rightarrow H^{0}(F^0) \xrightarrow{[\cdot, \phi_\bullet]} H^{0}(F^1) \rightarrow \mathbb{H}^1([\cdot, \phi_\bullet]) \\
        &\rightarrow H^{1}(F^0)\xrightarrow{[\cdot, \phi_\bullet]}  H^1(F^1) \rightarrow \mathbb{H}^2([\cdot, \phi_\bullet]) \rightarrow 0.
    \end{alignat*}  
    \end{remark} 
          Now, we therefore conclude the following main result of the article.

 \begin{theorem}\label{main theorem-1}
 Let $E_\bullet$ be a $\alpha$-stable quiver bundle of type t on a quiver $Q$.
 \begin{enumerate}
     \item The Zariski tangent space at the point defined by $E_\bullet$ in the moduli space $\mathcal{M}_\alpha^s (t)$ is isomorphic to $\mathbb{H}^1(F^\bullet(E_\bullet, E_\bullet)).$
     \item If $\mathbb{H}^2(F^\bullet(E_\bullet, E_\bullet)) = 0$, then the moduli space $\mathcal{M}_\alpha^s (t)$ is smooth in a neighbourhood of the point defined by $E_\bullet$.
     \item $\mathbb{H}^2(F^\bullet(E_\bullet, E_\bullet)) = 0$ if and only if the homomorphism (in Proposition \ref{prop.1})
     \[
     d : \bigoplus_{i \in V} \text{Ext}^1(E_i, E_i) \rightarrow \bigoplus_{a \in A}\text{Ext}^1(E_{at}, E_{ah})
     \]
     is onto.

     \item At a smooth point $E_\bullet \in \mathcal{M}_\alpha^s (t)$, the dimension of the moduli space $\mathcal{M}_\alpha^s (t)$ is 
     
     \[\dim \mathcal{M}_\alpha^s (t) = h^1(E_\bullet, E_\bullet) = 1 - \chi(E_\bullet, E_\bullet).\]
     \item If for each arrow $a \in A$, $\phi_a: E_{ta} \to E_{ha}$ is injective or generically surjective, then $\mathbb{H}^2(F^\bullet(E_\bullet, E_\bullet)) = 0$ and therefore $E_\bullet$ defines a smooth point of $\mathcal{M}_\alpha^s (t)$.
     
     \item If $\alpha_{at} - \alpha_{ah} \geq 2g -2$ for all $a \in A$, then $E_\bullet$ defines a smooth point in the moduli space, and hence $\mathcal{M}_\alpha^s (t)$ is smooth.
 \end{enumerate}
 \begin{proof}
The result (1) follows from Corollary \ref{prop.3}. Proposition \ref{prop.2} and \cite[Theorem 3.1]{B-R-1994} together conclude the proof of (2). By the proof of Proposition \ref{prop.1}, we conclude (3). And (4) follows from Proposition \ref{prop.4} and Corollary \ref{corollary-1} together. Proposition \ref{prop.1}(4) implies (5). (6) follow from (1) and Proposition \ref{prop.1}(3).
  \end{proof}
 \end{theorem}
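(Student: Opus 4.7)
The plan is essentially to harvest the machinery developed earlier in the paper and apply each piece to the appropriate item, so the proof reduces to a careful chain of citations; the only part requiring real deformation-theoretic input is (2), and the rest are one- or two-line deductions from what is already on the table.

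For (1), I would first identify the complex $[\cdot,\phi_\bullet]$ introduced before Proposition \ref{prop.2} with the complex $F^\bullet(E_\bullet,E_\bullet)$ built in Section 3 (the terms $F^0,F^1$ and the differential agree tautologically when $E'_\bullet = E_\bullet$). Given this identification, Corollary \ref{prop.3} already says the tangent space at $(E_\bullet,\phi_\bullet)$ is $\mathbb{H}^1([\cdot,\phi_\bullet]) = \mathbb{H}^1(F^\bullet(E_\bullet,E_\bullet))$, so (1) is immediate. For (2), the deformation-theoretic content is that, once the tangent space is $\mathbb{H}^1$ and the obstruction space is $\mathbb{H}^2$, the vanishing of $\mathbb{H}^2(F^\bullet(E_\bullet,E_\bullet))$ forces any first-order deformation to lift to all orders; this is where Proposition \ref{prop.2} (which canonically parametrizes infinitesimal deformations by $\mathbb{H}^1$) combines with the standard unobstructedness criterion of Biswas--Ramanan, invoked as \cite[Theorem 3.1]{B-R-1994}, to produce smoothness of $\mathcal{M}_\alpha^s(t)$ near $E_\bullet$. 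This is the only step with any substance, and the main subtlety to check is that the obstruction class of a quiver bundle indeed lands in $\mathbb{H}^2$ — but this is exactly the content of the hypercohomology exact sequence inherited from Proposition \ref{proposition of isomorphism on hypercohomology}.

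For (3), I would read off the tail of the long exact sequence in Proposition \ref{proposition of isomorphism on hypercohomology}, namely
\[
H^1(F^0)\xrightarrow{\,d\,} H^1(F^1)\longrightarrow \mathbb{H}^2(F^\bullet(E_\bullet,E_\bullet))\longrightarrow 0,
\]
which gives the equivalence. For (4), assuming $E_\bullet$ is a smooth point, (1) says $\dim \mathcal{M}_\alpha^s(t) = h^1(E_\bullet,E_\bullet)$. Since $E_\bullet$ is $\alpha$-stable, the proposition computing $\mathbb{H}^0$ for stable bundles with equal $\alpha$-slope gives $h^0(E_\bullet,E_\bullet)=1$, and smoothness (via (2)) forces $h^2(E_\bullet,E_\bullet)=0$, so the Euler characteristic identity $\chi(E_\bullet,E_\bullet) = h^0 - h^1 + h^2$ collapses to $h^1 = 1-\chi$; the second expression is then made explicit through Proposition \ref{prop.4}. (Corollary \ref{corollary-1} furnishes the same identity in the semistable setting when all $\alpha_{at}-\alpha_{ah}>2g-2$, which is a convenient cross-check.)

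Finally (5) and (6) are direct corollaries: part (4) of Proposition \ref{prop.1} gives $\mathbb{H}^2=0$ whenever every $\phi_a$ is injective or generically surjective, and part (3) of the same proposition gives $\mathbb{H}^2=0$ under the bound $\alpha_{at}-\alpha_{ah}\geqslant 2g-2$ once we know $E_\bullet$ is $\alpha$-stable (so the hypothesis "one of $E_\bullet, E'_\bullet$ is $\alpha$-stable" is satisfied with $E'_\bullet = E_\bullet$). In both cases, (2) then upgrades the vanishing to smoothness at $E_\bullet$, and since the hypothesis of (6) is parameter-level and the hypothesis of (5) holds on a Zariski-open locus, the corresponding smoothness assertions follow for the whole moduli space. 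The only place where I expect any genuine care is making sure the abstract obstruction theory invoked in step (2) is compatible with the quiver-bundle deformation functor of Proposition \ref{prop.2}; everything else is formal assembly of previously established propositions.
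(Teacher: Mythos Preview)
Your proposal is correct and follows essentially the same route as the paper: (1) via Corollary~\ref{prop.3}, (2) via Proposition~\ref{prop.2} combined with \cite[Theorem~3.1]{B-R-1994}, (3) from the tail of the hypercohomology long exact sequence (which is precisely how the proof of Proposition~\ref{prop.1} opens, so your citation and the paper's ``by the proof of Proposition~\ref{prop.1}'' amount to the same thing), and (5)--(6) from parts (4) and (3) of Proposition~\ref{prop.1} respectively. One small slip to fix in (4): you write that smoothness ``via (2)'' forces $h^2(E_\bullet,E_\bullet)=0$, but (2) only gives the implication $h^2=0\Rightarrow$ smooth, not its converse; the paper handles this instead by invoking Corollary~\ref{corollary-1} together with Proposition~\ref{prop.4}, where the vanishing of $\mathbb{H}^2$ is already part of the hypothesis, so the identity $h^1=1-\chi$ drops out directly.
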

 The next we construct the projective bundle over $\mathcal{M}_\alpha^s (t)$. Before going the construction,
first note that for each $i \in V$, let $(r_i, d_i) = 1$, there is a Poincar\'e $\alpha$-semistable quiver bundle $ \mathcal{P}_\bullet =  (\mathcal{P}_i )_{i \in V}$ on $\mathcal{M}_\alpha^s (t) \times C$ such that ${\mathcal{P}_i}_{\vert_{[E_i] \times C}} \cong [E_i]$. For general $r_i$ and $d_i$, one can prove that locally in the \'etale topology on $\mathcal{M}_\alpha^s(t)$, there is a Poincar\'e family in $\mathcal{M}_\alpha^s (t)$.

  \begin{theorem}\label{theorem construstion of poincare bundle}
   Let $Q = (V, A)$ be a finite quiver and let $C$ be a complex smooth projective curve. Let $\alpha = (\alpha_i)_{i \in V}$ be a stability parameter. Then for a fixed type $t = (r, d)$ on $Q$ where $r=(r_i)_{i \in V}$ and $d = (d_i)_{i\in V}$ with $r_i$ and $d_i$ are co-prime and for all $i \in V$, there is a locally non trivial map
   \[
   \pi : \mathbb{P} \rightarrow \mathcal{M}_\alpha^s(t)
   \]
   where $\mathbb{P}$ is a fiber product of projective bundles such that the fiber of $\pi$ at  a point $E_\bullet = {(E)}_{i \in V}$ is $\prod\limits_{a \in A} \mathbb{P}(\mathcal{E}_a)$.
   \begin{proof}
   Since $r_i$ and $d_i$ is co-prime for each $i \in V$, there is a Poincar\'e $Q$-bundles $\mathcal{P_\bullet} = {(\mathcal{P}_i)}_{i \in Q}$ on $\mathcal{M}_\alpha^s (t) \times C$ such that ${\mathcal{P}_i}_{\vert_{[E_i] \times C}} \cong [E_i]$ via the identification $[E_i]\times C \cong C$. Consider $
   \mathcal{U}_{C}(t) = \{ E_\bullet = (E_i)_{i \in V} \in \mathcal{M}_\alpha^s(t) \; \vert \; \text{for each}\; i, \; E_i \; \text{is semistable vector bundle on}\; C \}$.

   Let $\iota : \mathcal{U}_{C}(t) \hookrightarrow  \mathcal{M}_\alpha^s(t)$ be the inclusion map.
   Note that the moduli space $\mathcal{U}_{C}(t)$ can be identified with $\prod\limits_{i \in V}  \mathcal{U}_{C}(r_i, d_i)$. Therefore, it is a smooth irreducible projective variety over $C$. Let 
   \[ \pi_i : \mathcal{U}_{C}(t) \rightarrow \mathcal{U}_C(r_i, d_i)
   \]
    be the $i$-th projection. We define the following sheaves on $\mathcal{M}_\alpha^s(t)$:
   \[
   \mathcal{E}_{a} = \mathcal{H}om_C(\pi^*(\iota^*(\mathcal{P}_{at})), \pi^*(\iota^*(\mathcal{P}_{ah})))
   \]
   for all $a \in A$. Let 
   \[
   \pi : \mathbb{P} \rightarrow \mathcal{M}_\alpha^s(t)   \]
   be denote the fiber product of $\mathbb{P}(\mathcal{E}_{a})$ over $\mathcal{M}_\alpha^s(t)$. By the construction, the fibre of $\pi$ at a point $E_\bullet = {(E)}_{i \in V}$ is $\prod\limits_{a \in A} \mathbb{P}(\mathcal{E}_a)$.
   \end{proof}
   \end{theorem}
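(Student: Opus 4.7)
The plan is to build $\mathbb{P}$ arrow-by-arrow as a fibre product of projective bundles associated to universal $\text{Hom}$-sheaves. First I would invoke the coprimality hypothesis $\gcd(r_i, d_i) = 1$ at each vertex $i \in V$ to produce a Poincar\'e bundle $\mathcal{U}_i$ on $\mathcal{U}_C(r_i, d_i) \times C$. Then I would pass to the open subset $\mathcal{U}_C(t) \subset \mathcal{M}_\alpha^s(t)$ of points whose underlying vector bundle $E_i$ is semistable for every $i$, and observe that the forgetful assignment $E_\bullet \mapsto (E_i)_{i \in V}$ gives natural maps $\pi_i : \mathcal{U}_C(t) \to \mathcal{U}_C(r_i, d_i)$. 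Pulling each $\mathcal{U}_i$ back along $\pi_i \times \text{id}_C$ yields a family $\mathcal{P}_i$ on $\mathcal{U}_C(t) \times C$ whose restriction to $\{E_\bullet\} \times C$ is canonically $E_i$.

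Next, for each arrow $a \in A$ I would form the sheaf $\mathcal{H}om(\mathcal{P}_{at}, \mathcal{P}_{ah})$ on $\mathcal{U}_C(t) \times C$ and push it forward along the projection $q : \mathcal{U}_C(t) \times C \to \mathcal{U}_C(t)$ to define $\mathcal{E}_a := q_* \mathcal{H}om(\mathcal{P}_{at}, \mathcal{P}_{ah})$. The fibre of $\mathcal{E}_a$ at $E_\bullet$ is canonically $\text{Hom}_C(E_{at}, E_{ah})$, and assuming $\mathcal{E}_a$ is locally free (see the obstacle below), its projectivisation $\mathbb{P}(\mathcal{E}_a) \to \mathcal{U}_C(t)$ is a Zariski-locally trivial projective bundle with fibre $\mathbb{P}(\text{Hom}_C(E_{at}, E_{ah}))$ over $E_\bullet$. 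Forming the fibre product of these projective bundles over $\mathcal{U}_C(t)$ produces $\pi : \mathbb{P} \to \mathcal{U}_C(t)$ whose fibre at $E_\bullet$ is $\prod_{a \in A} \mathbb{P}(\text{Hom}_C(E_{at}, E_{ah}))$, as required.

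The main obstacle is two-fold. First, a global Poincar\'e $Q$-bundle on $\mathcal{M}_\alpha^s(t)$ need not exist even under vertex-wise coprimality, so the construction above \emph{a priori} lives only over $\mathcal{U}_C(t)$, or more generally only \'etale-locally on $\mathcal{M}_\alpha^s(t)$; extending $\pi$ to the full moduli space requires performing the construction on an \'etale cover and gluing, which is consistent because $\mathbb{P}(\mathcal{E}_a)$ is insensitive to twisting $\mathcal{E}_a$ by a pulled-back line bundle from the base. Second, local freeness of each $\mathcal{E}_a$ is not automatic: one needs either $R^1 q_* \mathcal{H}om(\mathcal{P}_{at}, \mathcal{P}_{ah}) = 0$ or at least constancy of $h^0(C, E_{at}^* \otimes E_{ah})$ in the family. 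The cleanest route is to invoke the hypothesis $\alpha_{at} - \alpha_{ah} \geqslant 2g - 2$ of Theorem \ref{main theorem-1}, under which Proposition \ref{prop.1} and Serre duality control the relevant $\text{Ext}^1$'s; verifying this vanishing and then running cohomology and base change is the key technical step that legitimises the claimed fibre structure and the local triviality of $\pi$.
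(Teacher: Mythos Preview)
Your approach is essentially the same as the paper's: invoke vertex-wise Poincar\'e bundles via coprimality, restrict to the open locus $\mathcal{U}_C(t)$ where each $E_i$ is semistable, form the relative $\mathcal{H}om$ sheaves $\mathcal{E}_a$ along arrows, projectivise, and take the fibre product over the base. The only cosmetic difference is that the paper starts from a Poincar\'e $Q$-bundle on $\mathcal{M}_\alpha^s(t)\times C$ and then pulls back via $\iota$ and $\pi_i$, whereas you pull back the individual Poincar\'e bundles from $\mathcal{U}_C(r_i,d_i)$; these amount to the same data on $\mathcal{U}_C(t)\times C$.

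Where you actually go \emph{further} than the paper is in naming the two technical obstacles. The paper's proof does not address local freeness of $\mathcal{E}_a$ (i.e.\ constancy of $h^0(E_{at}^*\otimes E_{ah})$ or vanishing of $R^1q_*$), nor does it discuss gluing the \'etale-local construction to cover all of $\mathcal{M}_\alpha^s(t)$; it simply writes down $\mathcal{E}_a$ and declares $\mathbb{P}(\mathcal{E}_a)$ a projective bundle. So your proposal is not only aligned with the paper's argument but is more scrupulous about the points the paper takes for granted. One caution: your suggested fix via Proposition~\ref{prop.1} and the inequality $\alpha_{at}-\alpha_{ah}\geqslant 2g-2$ controls $\mathrm{Ext}^1$ terms in the hypercohomology of the two-term complex, not directly $H^1(E_{at}^*\otimes E_{ah})$ for a single arrow, so if you pursue that route you should isolate the relevant Serre-duality step carefully rather than quoting the proposition wholesale.
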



\begin{thebibliography}{99}
  \bibitem{B-R-1994} I. Biswas, S. Ramanan, {\em An Infinitesimal Study of the Moduli of Hitchin Pairs}, J. London Math. Soc. (2) 49 (1994) 219-231.
\bibitem{B-S-2017} I. Biswas, G. Schumacher,   {\em Differential geometry of moduli spaces of quiver bundles} J. Geom. Phys. 118, pp. 51-66 (2017).  
  
\bibitem{Gothen-King-2004} Peter B. Gothen, Alastair D. King, {\em Homological algebra of twisted quiver bundles}, J. London Math. Soc. (2) 71, no. 1, pp. 85–99 (2005)
\bibitem{Consul-Prada-Schmitt-05} Luis Alvarez-Consul, Oscar Garcia-Prada, Alexander H.W. Schmitt, {\em On the geometry of Moduli Spaces of Holomorphic Chains over Compact Riemann Surfaces}, Inter. Math. Res. Pap. pp. 1-82 (2005).
\bibitem{Consul-09} Luis Alvarez-Consul, {\em Some results on the moduli spaces of quiver bundles}, Geom Dedicata 139, 99 (2009)

\bibitem{AS-05} Alexander H.W. Schmitt, {\em Moduli for decorated tuples of sheaves and representation spaces for quivers} Proc. Indian Acad. Sci. (Math. Sci.) Vol. 115, No. 1, pp. 15–49 (2005).

\bibitem{AS-2003} Alexander H.W. Schmitt, {\em Moduli problems of sheaves associated with oriented trees}, Algebras and Representation Theory 6, pp. 1-32, (2003).
\bibitem{Mumford-GIT-1965} D. Mumford, \textit{ Geometric invariant theory}, Ergeb. Math. Grenzgeb. Neue Folge, vol. 34, Springer-Verlag, Berlin - New York, 1965.

\bibitem{Seshadri-1982} C. S. Seshadri, \textit{Fibrés vectoriels sur les courbes algébriques}, Astérisque 96, Société Math. France, Paris, 1982.

 \end{thebibliography}
 \end{document}